\newtheorem{theorem}{Theorem}[section]
\newtheorem*{theorem*}{Theorem}
\newtheorem{lemma}{Lemma}[section]
\newtheorem{corollary}{Corollary}[section]
\theoremstyle{remark}
\newtheorem{remark}{Remark}[section]
\newcommand{\EE}{\mathbb{E}}
\newcommand{\PP}{\mathbb{P}}
\begin{document}
\title{On the positivity of some weighted partial sums of a random multiplicative function}
\author{Marco Aymone}
\begin{abstract}
Inspired by the papers by Angelo and Xu, Q.J Math., 74, pp. 767-777, and improvements by Kerr and Klurman, arXiv:2211.05540,  we study the probability that the weighted sums of a Rademacher random multiplicative function,
$\sum_{n\leq x}f(n)n^{-\sigma}$, are positive for all $x\geq x_\sigma\geq 1$ in the regime $\sigma\to1/2^+$. In a previous paper by Heap, Zhao and the author, and by the author, when $0\leq \sigma\leq 1/2$ this probability is zero. Here we give a positive lower bound for this probability depending on $x_\sigma$ that becomes large as $\sigma\to1/2^+$. The main inputs in our proofs are a maximal inequality based in relatively high moments for these partial sums combined with a Bonami--Halász's moment inequality, and also explicit estimates for the partial sums of non-negative multiplicative functions.  
\end{abstract}
\maketitle
\section{Introduction}
\subsection{Main result and background} Let $f$ be a Rademacher random multiplicative function, i.e., at primes $(f(p))_p$ is a sequence of i.i.d. random variables having probability $1/2$ to be equal either to $\pm 1$, and at the positive integers,
$$f(n)=\mu^2(n)\prod_{p|n}f(p),$$
where $\mu$ is the M\"obius function. 

For a short and almost complete survey on this topic we refer to the last paper by the author on this subject, see Section 1.3 of \cite{aymone_sign_changes_II}.

In a recent paper by Angelo and Xu \cite{angelo_xu_positivity}, motivated by a Turán conjecture on the positivity of the weighted partial sums of the Liouville function $\lambda$ and its connections with the Riemann hypothesis, they proved that the probability that the partial sums
$$\sum_{n\leq x}\frac{f^*(n)}{n}$$
are positive for all $x\geq 1$ is at least $1-10^{-45}$, where $f^*$ is a Rademacher random \textit{completely} multiplicative function. They also provided an upper bound depending on $x$ for the probability that the partial sum up to $x$ is negative, and this was later improved by Kerr and Klurman, see \cite{klurman_positivity}.

In a paper by Heap, Zhao and the author \cite{aymone_sign_changes}, and by the author \cite{aymone_sign_changes_II}, it was proved that for each $0\leq \sigma\leq 1/2$, the partial sums
\begin{equation}\label{equation weighted sums}
\sum_{n\leq x}\frac{f(n)}{n^\sigma}
\end{equation}
change its sign infinitely often as $x\to\infty$ almost surely. In particular, the probability that these partial sums are positive for all $x\geq x_0$ is zero, for any $x_0\geq 1$.

The aim of this paper is to study the probability that, for each fixed $\sigma>1/2$, the partial sums in \eqref{equation weighted sums} are positive for all $x\geq x_\sigma$, where $x_{\sigma}\geq 1$ may depend on $\sigma$. 

We study this range because of the following facts.

\begin{enumerate}
\item As $\sigma\to1/2^{+}$, $\sum_{n=1}^\infty\frac{f(n)}{n^\sigma}\to 0$ almost surely, so we may have a considerable possibility of a number of sign changes before the partial sums stabilizes in the limit.
\item As said before, almost surely $\sum_{n\leq x}\frac{f(n)}{n^{1/2}}$ changes its sign infinitely often as $x\to\infty$.
\end{enumerate}

Our main result states the following.
\begin{theorem}\label{theorem principal} Let $f$ be a Rademacher random multiplicative function, $\delta>0$ and $\theta$ a fixed parameter in the interval $(0,1)$. Then, there exists $\sigma_0\in(1/2,1]$ such that for all $1/2<\sigma\leq \sigma_0$ and
$x\geq 1$ defined by 
$$\log x =\frac{1}{(\sigma-1/2)^{1/\theta}},$$
the probability that the partial sums $\sum_{n\leq y}\frac{f(n)}{n^\sigma}$ are positive for all $y>x$
is at least
$$1-\exp\left(-\frac{1+o(1)}{2\theta}\frac{(\log x)^{2-2\theta}}{(\log\log x)^{1+2\delta}}\right).$$
\end{theorem}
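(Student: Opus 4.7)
The starting point is that for $\sigma>1/2$ the series $S_\sigma(\infty):=\sum_{n=1}^\infty f(n)/n^\sigma$ converges almost surely and admits the Euler product $\prod_p(1+f(p)/p^\sigma)$, every factor of which is strictly positive since $p^{-\sigma}\leqs 2^{-1/2}<1$; in particular $S_\sigma(\infty)>0$ almost surely. Writing $T_\sigma(y):=\sum_{n>y}f(n)/n^\sigma$, the partial sum equals $S_\sigma(\infty)-T_\sigma(y)$, so the event that it is positive for every $y>x$ is $\{\sup_{y>x}T_\sigma(y)<S_\sigma(\infty)\}$. For a threshold $M>0$ to be chosen, the failure probability is at most
\[
\PP(S_\sigma(\infty)\leqs M)+\PP\!\left(\sup_{y>x}|T_\sigma(y)|>M\right),
\]
and I would take $M=(2\sigma-1)^{1/2}\exp(-u)$ with $u\asymp(\log x)^{1-\theta}/(\log\log x)^{\delta}$; via $\sigma-1/2=(\log x)^{-\theta}$ this places $M$ far above the typical scale of $T_\sigma$.

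For the first probability, $\log S_\sigma(\infty)=\sum_p\log(1+f(p)/p^\sigma)$ is a sum of independent bounded variables of mean $-\tfrac12\log\zeta(2\sigma)\sim\tfrac12\log(2\sigma-1)$, with $\sum_p(b_p-a_p)^2\sim 4V$, where $V:=\sum_p p^{-2\sigma}\sim\theta\log\log x$ by Mertens (using $\sigma-1/2=(\log x)^{-\theta}$). Hoeffding's inequality then yields, after a choice of the implicit constant in $u$ to absorb the $1+o(1)$ in $\sum_p(b_p-a_p)^2$,
\[
\PP(S_\sigma(\infty)\leqs M)\leqs \exp\!\left(-\frac{u^2}{2V}\right)\leqs \exp\!\left(-\frac{1}{2\theta}\frac{(\log x)^{2-2\theta}}{(\log\log x)^{1+2\delta}}\right),
\]
which is exactly the target bound of the theorem; the constant $\tfrac{1}{2\theta}$ is a direct reflection of $V\sim\theta\log\log x$.

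For the second probability, $N_y:=\sum_{x<n\leqs y}f(n)/n^\sigma$ is an $L^2$-martingale with limit $T_\sigma(x)$ and $\sup_{y>x}|T_\sigma(y)|\leqs 2\sup_y|N_y|$, so Doob's $L^{2k}$ maximal inequality reduces the estimate to high moments of $T_\sigma(x)$. I would split by the number of prime factors: on $\omega(n)\leqs K_0:=C\log\log x$ the Bonami--Halász hypercontractive inequality yields $\|T_\sigma^{\leqs K_0}(x)\|_{2k}\leqs(2k-1)^{K_0/2}\sqrt{V_{\rm tail}}$ with $V_{\rm tail}=\sum_{n>x,\,\mathrm{sqf}}n^{-2\sigma}\asymp(\log x)^{\theta}\exp(-2(\log x)^{1-\theta})$. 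Since $M^2/V_{\rm tail}\asymp\exp(2(\log x)^{1-\theta})$ is astronomical, optimising $k$ gives a bound for this part well below the target. The contribution of $\omega(n)>K_0$ is handled by a Chebyshev bound combined with the Rankin-type estimate $\sum_{\omega(n)>K_0}n^{-2\sigma}\leqs\alpha^{-K_0}\prod_p(1+\alpha/p^{2\sigma})$ optimised at $\alpha=K_0/V$, which for $C$ large enough renders this $\ell^2$-mass negligible compared with $V_{\rm tail}$.

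The main technical obstacle is the quantitative balancing of the parameters $M$, $K_0$ and $k$ so that both halves of the decomposition sit below the target, and in particular one must show, using the explicit estimates for partial sums of non-negative multiplicative functions mentioned in the abstract, that the Rankin bound for $\omega(n)>K_0$ is strong enough that a plain Chebyshev bound there suffices. Once this bookkeeping is complete, summation of the two estimates produces the claim, the Hoeffding estimate of the first step being the binding constraint that fixes the exponent $(\log x)^{2-2\theta}/(\log\log x)^{1+2\delta}$ and its coefficient $\tfrac{1}{2\theta}$.
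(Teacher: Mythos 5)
Your overall architecture coincides with the paper's: split the failure event into $\{S_\sigma(\infty)\leq M\}$ and $\{\sup_{y>x}|T_\sigma(y)|>M\}$, control the first via the Euler product and Hoeffding on $\sum_p f(p)p^{-\sigma}$ with variance $\sum_p p^{-2\sigma}\sim\theta\log\log x$, and observe that this term is the binding one, giving exactly the exponent $\frac{1}{2\theta}(\log x)^{2-2\theta}(\log\log x)^{-1-2\delta}$. That half of your argument is correct and essentially identical to the paper's Lemma \ref{lemma euler product probability}.

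The genuine gap is in your treatment of the supremum. The process $N_y=\sum_{x<n\leq y}f(n)/n^{\sigma}$ is \emph{not} a martingale in $y$ with respect to any natural filtration: with $\mathcal{F}_y=\sigma(f(p):p\leq y)$ one has $\EE[f(n)\mid\mathcal{F}_y]=f(n)\neq 0$ for every $y$-smooth $n>y$ (e.g.\ $\EE[f(6)\mid f(2),f(3),f(5)]=f(2)f(3)$), so the increments are not conditionally centered and Doob's $L^{2k}$ maximal inequality does not apply. This is precisely the obstruction the paper works around: it replaces Doob by Billingsley's Theorem 10.2, a maximal inequality that requires only the moment bound $\PP(|S_j-S_i|\geq\lambda)\leq\lambda^{-4\beta}(\sum_{i<n\leq j}u_n)^{2\alpha}$ on increments — which Bonami--Hal\'asz supplies with $u_n=\mu^2(n)a^2(n)(m-1)^{\omega(n)}$ — at the cost of tracking the constant $K_{\alpha,\beta}\leq\kappa^{8\beta}$. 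Your remaining steps are salvageable once you make this substitution, since as you note $M^2/V_{\mathrm{tail}}\asymp\exp\bigl((2+o(1))(\log x)^{1-\theta}\bigr)$ leaves enormous room. A secondary difference, not an error: you propose to tame the $(m-1)^{\omega(n)}$ weight by a Harper-style split at $\omega(n)\leq C\log\log x$ plus a Rankin bound, whereas the paper bounds $\sum_{n>x}\mu^2(n)(m-1)^{\omega(n)}n^{-2\sigma}$ directly through an explicit Hal\'asz--Montgomery-type estimate for partial sums of non-negative multiplicative functions (Lemmas \ref{lemma explicite estimate partial sum} and \ref{lemma evaluacao da serie}), which yields the factor $(\log x)^{c_5 m}x^{-(2\sigma-1)}$ and then a clean optimization in $m\asymp(\log x)^{1-\theta}/\log\log x$; either route should close, but yours requires the extra bookkeeping for the $\omega(n)>K_0$ range that the paper avoids.
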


\begin{remark}
It is interesting to observe that, under complete \textit{independence} and \textit{symmetry}, that is, for i.i.d and symmetric random variables $(X_n)_n$ (Rademacher random variables for example), we have that for all $x\geq 1$, 
$$\sum_{n\leq x}\frac{X_n}{n^\sigma}$$
has equal probability of being positive or negative.
\end{remark}

We obtain the immediate consequence from the theorem above.

\begin{corollary} The probability that the partial sum $\sum_{n\leq y}\frac{f(n)}{n^\sigma}$ is negative for some $y>x$, where $x$ is as in Theorem \ref{theorem principal}, is at most
$$\exp\left(-\frac{1+o(1)}{2\theta}\frac{(\log x)^{2-2\theta}}{(\log\log x)^{1+2\delta}}\right).$$
\end{corollary}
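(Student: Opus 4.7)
The plan is to obtain the corollary directly from Theorem \ref{theorem principal} by passing to complementary events; no new estimate is required. Define
$$A=\{\omega:\ \textstyle\sum_{n\leq y}f(n)/n^{\sigma}>0\ \text{for all}\ y>x\},\qquad B=\{\omega:\ \textstyle\sum_{n\leq y}f(n)/n^{\sigma}<0\ \text{for some}\ y>x\}.$$
Since the partial sum is a step function of $y$ that only changes value at positive integers, both $A$ and $B$ are determined by countably many integer-indexed partial sums, hence are measurable events in the $\sigma$-algebra generated by $(f(p))_p$.

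The key observation is the inclusion $B\subseteq A^{c}$: if for some $y>x$ we have $\sum_{n\leq y}f(n)/n^{\sigma}<0$, then certainly the statement ``$\sum_{n\leq y}f(n)/n^{\sigma}>0$ for every $y>x$'' fails, so $\omega\in A^{c}$. Taking probabilities and applying Theorem \ref{theorem principal},
$$\PP(B)\leqs \PP(A^{c})=1-\PP(A)\leqs 1-\left(1-\exp\left(-\frac{1}{2\theta}\frac{(\log x)^{2-2\theta}}{(\log\log x)^{1+2\delta}}\right)\right),$$
which simplifies to the desired bound.

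There is no substantive step here and no obstacle to overcome; the only point worth recording explicitly is the measurability remark above, which ensures that the event $B$ has a well-defined probability so that the complementation argument is legitimate. The parameters $\sigma$, $\sigma_0$, $\theta$, $\delta$ and the threshold $x$ are inherited verbatim from Theorem \ref{theorem principal}, so the corollary holds in exactly the same range.
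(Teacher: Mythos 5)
Your proof is correct and is exactly the complementation argument the paper has in mind when it calls the corollary an ``immediate consequence'' of Theorem \ref{theorem principal}: the event of being negative for some $y>x$ is contained in the complement of being positive for all $y>x$, so its probability is at most $1-\PP(A)\leqs \exp\bigl(-\frac{1}{2\theta}\frac{(\log x)^{2-2\theta}}{(\log\log x)^{1+2\delta}}\bigr)$. The measurability remark is a harmless extra detail; nothing further is needed.
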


\begin{remark}\label{remark} It is interesting to compare our bound in the corollary above with that of Angelo and Xu \cite{angelo_xu_positivity} or with that of Kerr and Klurman \cite{klurman_positivity}. Indeed, for a Rademacher random completely multiplicative function $f^{*}$, and for $\sigma=1$, the bound that they obtain for the probability that $\sum_{n\leq x}\frac{f^*(n)}{n}$ is negative is exponentially small compared with our bound, that is, it is bounded by
$$\exp\left(-\exp\left(\beta \frac{\log x}{\log\log x}\right)\right).$$

Would our results and that of Angelo and Xu or that of Kerr and Klurman indicate a transition on this probability when one pass from $\sigma=1$ to $\sigma<1$?
\end{remark}

\subsection{Recent progress on sign changes of random multiplicative functions} We finish this introduction by mentioning some interesting progress in this field. 

In the paper \cite{geis_counting_sign_changes} by Geis and Hiary, building upon \cite{aymone_sign_changes}, the authors were able to provide a lower bound for the counting of the number of sign changes of $\sum_{n\leq x}f(n)$ as a function of $x$, almost surely. 

A few months later, Klurman, Lamzouri and Munsch \cite{klurman_counting_sign_changes} studied sign changes of short character sums and real zeros of Fekete polynomials. In this paper they provided a general approach that may be used to prove quantitative bounds in the counting of sign changes of partial sums of arithmetic functions in Analytic Number Theory. Their method apply to Rademacher random multiplicative functions and, as stated in their Remark 1.5, this may be used to improve the result by Geis and Hiary.
 
By considering averages, in the paper \cite{aymone_sign_changes_III}, the author was able to give a lower bound on the expected number of sign changes of $\sum_{n\leq x}f(n)$.

\section{Preliminaries}
\subsection{Notation} 
In this subsection we make a summary of all recurrent notation used in this paper. We hope that it may be useful for the reader always when he or she becomes overloaded with the plenty number of notation used here.

\subsubsection{Letters appearing throughout the text} We will let the letter $p$ to always represent a generic prime number, $n$ to represent a positive integer, $x$ and $y$ real variables used as the edge of an index of summation. $m$ is reserved for the size of the moments of certain random variables. The real variable $\sigma>1/2$ denotes exclusively the point where we are studying our truncated random Dirichlet series. We let $c_j$ to represent positive auxiliary constants and greek letters for the ultimate constants. $\lambda$ will be used as a threshold in a event where a random variable is above or beyond this threshold. The letter $f$ represents our Rademacher random multiplicative function. $\PP$ is the probability of an event.

\subsubsection{Functions in Number Theory} Here $\mu$ denotes the M\"obius function which is defined to be the multiplicative function such that at primes $p$, $\mu(p)=-1$, and at prime powers $p^k$, $k\geq 2$, $\mu(p^k)=0$. The Liouville $\lambda$ is the completely multiplicative function such that at primes $\lambda(p)=-1$. $\omega(n)$ denotes the number of distinct primes that divide $n$.

\subsubsection{Asymptotic notation} We use the standard Vinogradov notation $f(x)\ll g(x)$ or Landau's $f(x)=O(g(x))$ whenever there exists a constant $c>0$ such that $|f(x)|\leq c|g(x)|$, for all $x$ in a set of parameters. When not specified, this set of parameters is an infinite interval $(a,\infty)$ for sufficiently large $a>0$. Sometimes is convenient to indicate the dependence of this constant in other parameters. For this, we use both $\ll_\delta$ or $O_\delta$ to indicate that $c$ may depends on $\delta$. The standard $f(x)=o(g(x))$ means that $f(x)/g(x)\to0$ when $x\to a$, where $a$ could be a complex number or $\pm \infty$. 
\subsection{A combination of a maximal and a moment inequality}
\subsection{Maximal Inequalities} In the study of partial sums of a general set of random variables $(X_n)_n$ frequently one needs to have upper bounds for the probability
$$\PP\left(\max_{N\leq x\leq M}\left|\sum_{N\leq n\leq x}X_n\right|\geq \lambda\right).$$
In chapter I of the book of Pe\~na and Gin\'e \cite{pena_decoupling}, there are a plenty number of these inequalities when we have \textit{independence}. In the absence of independence the situation can be intricate and in certain cases such inequalities cannot be much better than the probability given by the union bound.

The main maximal inequality used in this paper is the following Theorem 10.2 of the book of Billingsley \cite{billingsley_convergence_prob_meas}: Let $S_N=X_1+...+X_N$. Suppose that $\alpha>1/2$ and that $\beta\geq 0$. Let $u_1,...,u_N$ be non-negative real numbers such that for all $\lambda>0$
\begin{equation*}
\PP(|S_j-S_i|\geq \lambda)\leq\frac{1}{\lambda^{4\beta}}\left(\sum_{i<n\leq j}u_n\right)^{2\alpha},\quad 0\leq i\leq j \leq N.
\end{equation*}
Then
\begin{equation*}
\PP\left(\max_{k\leq N}|S_k|\geq \lambda \right)\leq K'_{a,\beta}\frac{1}{\lambda^{4\beta}}\left(\sum_{n\leq N}u_n\right)^{2\alpha},
\end{equation*}
where $K'_{\alpha, \beta}$ is a positive constant that depends only on $\alpha$ and $\beta$. 

\subsubsection{The constant $K'_{\alpha,\beta}$}\label{subsection constant} More precisely, we will need to control the size of the constant $K'_{\alpha, \beta}$ as $\alpha$ and $\beta$ becomes large. 

By a careful inspection\footnote{Our target is the constant of Theorem 10.2 of \cite{billingsley_convergence_prob_meas} which we call here by $K'_{\alpha, \beta}$. This result is obtained as a consequence from Theorem 10.1 of \cite{billingsley_convergence_prob_meas} where it appears the constant $K_{\alpha,\beta}$. Furthermore, Theorem 10.1 is obtained as a consequence of Theorem 10.3 of \cite{billingsley_convergence_prob_meas} whose proof is divided in 4 steps. In step 1 the constant $K_{\alpha,\beta}$ is defined; in step 3, to make the argument work, $K_{\alpha,\beta}$ is replaced by $2^{2\alpha}K_{\alpha,\beta}$.} in the proof of the maximal inequality above, we see that 
$$K'_{\alpha, \beta}=(2^{2\alpha}K_{\alpha,\beta}+1)2^{4\beta},$$
where
$$K_{\alpha, \beta}= \frac{2^{2\alpha}}{C^{4\beta}}\sum_{n=1}^\infty\left(\frac{1}{\theta^{4\beta}2^{2\alpha-1}}\right)^n.$$
Here $\theta\in(0,1)$ is chosen so that the sum above converges and $C$ is chosen so that
$$C\frac{\theta}{1-\theta}=\frac{1}{2}.$$

Thus,
$$K_{\alpha, \beta}= 2^{2\alpha+4\beta}\frac{\theta^{4\beta}}{(1-\theta)^{4\beta}}\sum_{n=1}^\infty\left(\frac{1}{\theta^{4\beta}2^{2\alpha-1}}\right)^n.$$
Now, if $\beta=\alpha\geq 1$, which will be our case below, and $\theta=2^{-1/6}$, then 
$$\theta^{4\beta}2^{2\beta-1}\geq 2^{1/3}>1,$$ 
and hence
$$K_{\beta, \beta}\leq \kappa^{\beta},$$
for all $\beta\geq 1$, for some absolute $\kappa>0$.

Therefore, by abusing a little bit of notation, replacing $\kappa$ by a larger positive constant that we still denote by $\kappa$, we have that
$$K'_{\beta, \beta}\leq \kappa^\beta,$$
for all $\beta\geq 1$.

\subsection{Bonami--Halász's inequality} In \cite{halasz} it was proved by Halász the following inequality: Let $f$ be a Rademacher random multiplicative function, and $(a(n))_{n\leq N}$ be complex numbers. Then
$$\EE \left|\sum_{n\leq N}a(n)f(n)\right|^{m}\leq \left(\sum_{n\leq N}\mu^2(n)|a(n)|^2(m-1)^{\omega(n)}\right)^{m/2},$$
where $m\geq 2$ is any real number and $\omega(n)$ is the number of distinct primes that divide $n$. Actually, in his paper he proves such inequality in a more general context, and comments that he discovered long after the publication process that such inequality was proved before by Bonami \cite{bonami_inequality}. 

\subsection{The combination} We may use the two inequalities stated above to prove the following lemma.

\begin{lemma}\label{lemma maximal inequality}
Let $f$ be a Rademacher random multiplicative function. Let $(a(n))_n$ be a sequence of non-negative real numbers such that 
$$\sum_{n=1}^\infty a^2(n)(\log n)^2<\infty.$$
Let $x\geq 1$, $m\geq 4$ and $\lambda>0$ be any fixed parameters. Then, for some absolute constant $\kappa>0$
$$\PP\left(\sup_{y>x}\left|\sum_{n>y} a(n)f(n)\right|\geq \lambda\right)\leq \frac{\kappa^{m}}{\lambda^m}\left(\sum_{n>x}\mu^2(n)a^2(n)(m-1)^{\omega(n)}\right)^{m/2}.$$
\end{lemma}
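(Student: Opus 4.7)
The plan is to combine the two tools just introduced, Bonami--Halász's moment inequality and Billingsley's maximal inequality, and then pass from truncated partial sums to tail sums. Throughout, set $u_n := \mu^2(n) a^2(n) (m-1)^{\omega(n)}$ and $S_k := \sum_{x < n \leq x+k} a(n) f(n)$ for integer $k \geq 0$ (so $S_0 = 0$).

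First I will verify the hypothesis of Billingsley's Theorem 10.2 for the sequence $(S_k)$. For any $0 \leq i \leq j$, Bonami--Halász's inequality applied to the finite linear combination yields
$$\EE\left|S_j - S_i\right|^m = \EE\left|\sum_{x+i < n \leq x+j} a(n) f(n)\right|^m \leq \left(\sum_{x+i < n \leq x+j} u_n\right)^{m/2},$$
and Markov's inequality then gives
$$\PP\bigl(|S_j - S_i| \geq \lambda\bigr) \leq \frac{1}{\lambda^m}\left(\sum_{x+i < n \leq x+j} u_n\right)^{m/2},$$
which is exactly the Billingsley hypothesis with the choice $2\alpha = m/2$, $4\beta = m$, i.e. $\alpha = \beta = m/4$. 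The constraint $m > 2$ ensures $\alpha > 1/2$, so Billingsley applies.

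Next I will apply the maximal inequality to the finite family $\{S_k\}_{0 \leq k \leq N}$ and send $N \to \infty$. Using the bound $K_{\alpha,\beta} \leq \kappa^{8\beta} = \kappa^{2m}$ established in the preceding subsection, Billingsley's theorem gives, for every $N$,
$$\PP\left(\max_{0 \leq k \leq N}|S_k| \geq \lambda\right) \leq \frac{\kappa^{2m}}{\lambda^m}\left(\sum_{n > x} u_n\right)^{m/2}.$$
The events on the left are monotone increasing in $N$, so the limit $N \to \infty$ replaces the maximum by $\sup_{y > x}\bigl|\sum_{x < n \leq y} a(n) f(n)\bigr|$ (the sup over real $y$ agrees with the sup over integer $k$ because the partial sum is piecewise constant in $y$).

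Finally I will convert this \emph{partial-sum} supremum into the \emph{tail-sum} supremum appearing in the statement. The hypothesis $\sum a^2(n)(\log n)^2 < \infty$ ensures, via Menshov--Rademacher applied to the orthogonal system $(\mu^2(n) f(n))_n$, that $\sum_{n > x} a(n) f(n)$ converges almost surely. Writing
$$\sum_{n > y} a(n) f(n) = \sum_{n > x} a(n) f(n) - \sum_{x < n \leq y} a(n) f(n),$$
and observing that $\bigl|\sum_{n > x} a(n) f(n)\bigr|$ is the limit of its own partial sums and therefore bounded by $\sup_{y > x}\bigl|\sum_{x < n \leq y} a(n) f(n)\bigr|$, the triangle inequality gives
$$\sup_{y > x}\left|\sum_{n > y} a(n) f(n)\right| \leq 2\sup_{y > x}\left|\sum_{x < n \leq y} a(n) f(n)\right|.$$
Combining this with the estimate from the previous step and absorbing the extra factor $2^m$ into $\kappa^{2m}$ (by enlarging the absolute constant $\kappa$) yields the claim.

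The only real bookkeeping concern is tracking the exponent in Billingsley's constant, which is already handled in the preceding subsection; there is no additional arithmetic input, since everything arithmetic is encoded in the single invocation of Bonami--Halász.
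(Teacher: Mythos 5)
Your proof is correct and follows essentially the same route as the paper's: Markov plus Bonami--Hal\'asz to verify the hypothesis of Billingsley's Theorem 10.2 with $2\alpha=m/2$, $4\beta=m$, the bound $K_{\alpha,\beta}\leq\kappa^{8\beta}=\kappa^{2m}$, and Rademacher--Menshov for almost sure convergence. The only difference is that you spell out the passage from finite maxima of partial sums to the supremum of the tail sums (at the harmless cost of a factor $2^m$ absorbed into $\kappa^{2m}$), a step the paper leaves implicit by working directly with the tails $S_N=\sum_{n>N}a(n)f(n)$.
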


\begin{proof} Firstly we observe that, by the Rademacher--Menshov Theorem, see \cite{meaney} for an exposition of Salem's proof of this result, the condition 
$$\sum_{n=1}^\infty a^2(n)(\log n)^2<\infty$$ 
guarantees that the sum
$$\sum_{n=1}^\infty a(n)f(n)$$
converges almost surely.
 
  Set $S_N(x)=\sum_{n>N}a(n)f(n)$. By Markov's bound, for any $1\leq j <i$
$$\PP(|S_j(x)-S_i(x)|\geq \lambda)\leq \frac{1}{\lambda^m}\EE|S_j(x)-S_i(x)|^m.$$
Now, by the Bonami--Halász's inequality
$$\EE|S_j(x)-S_i(x)|^m\leq \left(\sum_{j<n\leq i}\mu^2(n)a^2(n)(m-1)^{\omega(n)}\right)^{m/2}.$$
Thus the hypothesis of the aforementioned inequality in the book of Billingsley \cite{billingsley_convergence_prob_meas} are satisfied with $2\alpha=m/2$, $4\beta=m$ and $u_n=\mu^2(n)a(n)^2(m-1)^{\omega(n)}$. Further, the existence of the absolute constant $\kappa>0$ is guaranteed by the discussion in Section \ref{subsection constant}. \end{proof}

\subsection{Hoeffding's inequality} This is a classical inequality used to estimate large tail probabilities of the sum of independent random variables. In our case, we need a version for the sum of an infinite quantity of random variables. More precisely, we will need the following bound:
\begin{lemma}[Hoeffding's inequality]\label{lemma hoeffding} Let $(f(p))_p$ be Rademacher i.i.d. random variables. Then, for any $\lambda>0$ and any $\sigma>1/2$, when $\sigma\to1/2^+$ we have
$$\PP\left(\sum_{p}\frac{f(p)}{p^{\sigma}}\geq \lambda\right)\leq\exp\left(-\frac{\lambda^2}{2}\frac{1+o(1)}{\log((\sigma-1/2)^{-1})}\right).$$
\end{lemma}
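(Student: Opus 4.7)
The plan is to prove a sub-Gaussian (Chernoff) concentration inequality for a finite truncation of the prime sum, transfer it to the infinite sum using the almost sure convergence that holds for $\sigma>1/2$, and then substitute the asymptotic expansion of the prime zeta function $\sum_p p^{-2\sigma}$ as $\sigma\to 1/2^+$. The statement as written contains the expression $\log((\sigma-1)^{-1})$, which I read as $\log((\sigma-1/2)^{-1})$; this is the only sensible quantity tending to $+\infty$ as $\sigma\to 1/2^+$, and it is what comes out naturally.

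First I would work with the finite truncation $S_P:=\sum_{p\leq P}f(p)/p^{\sigma}$. Since $f(p)$ is Rademacher,
$$\EE\exp\bigl(t\,f(p)/p^{\sigma}\bigr)=\cosh(t/p^{\sigma})\leq \exp\bigl(t^2/(2p^{2\sigma})\bigr),$$
so by independence $\EE\exp(tS_P)\leq \exp\bigl(\tfrac{t^2}{2}V_P\bigr)$ with $V_P:=\sum_{p\leq P}p^{-2\sigma}$. Markov's inequality with the optimal choice $t=\lambda/V_P$ yields
$$\PP(S_P\geq \lambda)\leq \exp\bigl(-\lambda^2/(2V_P)\bigr)\leq \exp\bigl(-\lambda^2/(2V)\bigr),\qquad V:=\sum_p p^{-2\sigma}.$$

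Since $\sigma>1/2$ gives $V<\infty$, Kolmogorov's two-series theorem implies $S_P\to S:=\sum_p f(p)/p^{\sigma}$ almost surely. For each $\varepsilon>0$ one has the inclusion $\{S\geq \lambda\}\subseteq \liminf_P\{S_P\geq \lambda-\varepsilon\}$, so Fatou's lemma applied to indicator functions gives
$$\PP(S\geq \lambda)\leq \liminf_P \PP(S_P\geq \lambda-\varepsilon)\leq \exp\bigl(-(\lambda-\varepsilon)^2/(2V)\bigr),$$
and letting $\varepsilon\downarrow 0$ transfers the Chernoff bound to the full prime sum $S$.

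Finally, the Mertens-type relation $\log\zeta(s)=\sum_p p^{-s}+O(1)$ combined with the simple pole of $\zeta$ at $s=1$ gives $\sum_p p^{-s}=\log\bigl(\tfrac{1}{s-1}\bigr)+O(1)$ as $s\to 1^+$. Specialising to $s=2\sigma$,
$$V=\log\Bigl(\frac{1}{2\sigma-1}\Bigr)+O(1)=(1+o(1))\log\Bigl(\frac{1}{\sigma-1/2}\Bigr)\quad(\sigma\to 1/2^+),$$
since $\log(1/(2\sigma-1))=\log(1/(\sigma-1/2))-\log 2$ and $\log(1/(\sigma-1/2))\to\infty$. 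Substituting into the Chernoff bound yields the claimed inequality. There is no genuine obstacle: the argument is the standard sub-Gaussian tail bound adapted to an infinite series of independent bounded variables. The only points that demand a little care are the interchange of tail probability and truncation limit, which is handled by a.s. convergence and Fatou, and the absorption of the $O(1)$ correction into the $1+o(1)$ factor in the exponent, which is legitimate precisely because we are in the boundary regime $\sigma\to 1/2^+$.
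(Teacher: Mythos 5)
Your proof is correct: the paper does not include its own argument but defers to Lemma 2.3 of the cited reference, and the standard Chernoff/moment-generating-function bound you give (via $\cosh(t)\leq e^{t^2/2}$, optimization at $t=\lambda/V$, transfer to the infinite sum by almost sure convergence and Fatou, and the estimate $\sum_p p^{-2\sigma}=(1+o(1))\log((\sigma-1/2)^{-1})$, which the paper itself records in \eqref{equation estimate sum over prime powers}) is exactly the intended route. Your reading of $\log((\sigma-1)^{-1})$ as a typo for $\log((\sigma-1/2)^{-1})$ is also the correct interpretation.
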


For a proof we refer to \cite{aymoneLIL}, Lemma 2.3. Actually, it is implicit in our lemma above the fact that, as $\sigma\to 1/2^+$,
\begin{equation}\label{equation estimate sum over prime powers}
\sum_p\frac{1}{p^{2\sigma}}=(1+o(1))\log((\sigma-1/2)^{-1}).
\end{equation} 

\section{Some explicit Number Theory}
In this section we devote our attention to estimate the right-handside of the upper bound for the probability given by Lemma \ref{lemma maximal inequality}. For this we will need to make explicit the dependence of this quantity in terms of the parameters involved. So, we ended up by reworking existing arguments in the literature to derive such explicit estimates. As an outcome, we will loose precision but will gain robustness.

A classical result for partial sums of non-negative multiplicative functions is the following: If for some $m>0$
$$\sum_{p\leq x}f(p)\log p\leq mx,$$
for all $x\geq 2$, and
$$\sum_{p^k,\,k\geq 2}\frac{f(p^k)k\log p}{p^k}\leq m,$$
then, there is a constant $c_1>0$ such that for all $x\geq 2$,
$$\sum_{n\leq x}f(n)\leq c_1(m+1)\frac{x}{\log x}\sum_{n\leq x}\frac{f(n)}{n}.$$

A proof of this result can be found in the book of Montgomery--Vaughan \cite{montgomery_livro} (Theorem 2.14)  or in the book of Ramaré \cite{ramare_book} (Theorem 13.1 and see also his references right before the statement of this theorem).

This result allow us to prove the following lemma. 
\begin{lemma}\label{lemma explicite estimate partial sum} There are positive constants $c_3>0$ and $c_5>0$ such that for all $m\geq 4$ and $x\geq 2$,
\begin{equation}\label{equation upper bound 3 omega n}
\sum_{n\leq x}\mu^2(n)(m-1)^{\omega(n)}\leq c_3mx(\log x)^{c_5m}.
\end{equation}
\end{lemma}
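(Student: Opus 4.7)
The plan is to apply the classical theorem on partial sums of non-negative multiplicative functions cited at the start of the section, with the role of $f$ played by
$$g(n):=\mu^2(n)(m-1)^{\omega(n)}.$$
Note that $g$ is multiplicative, with $g(p)=m-1$ and $g(p^k)=0$ for $k\geq 2$; in particular $g$ is supported on squarefree integers.

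First I would verify the two hypotheses with a parameter of size $O(m)$. Chebyshev's bound $\sum_{p\leq x}\log p\leq C_0\, x$ gives
$$\sum_{p\leq x}g(p)\log p=(m-1)\sum_{p\leq x}\log p\leq C_0\,m\,x,$$
so the first hypothesis holds after replacing the parameter ``$m$'' of the cited statement by $C_0 m$; the second hypothesis is trivial since $g$ vanishes on proper prime powers. The cited theorem therefore yields
$$\sum_{n\leq x}g(n)\leq c_1 C_0\,m\,\frac{x}{\log x}\sum_{n\leq x}\frac{g(n)}{n}.$$

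Second, because $g$ is supported on squarefree integers, I would dominate the Dirichlet-type factor by an Euler product and apply Mertens' theorem:
$$\sum_{n\leq x}\frac{g(n)}{n}\leq\prod_{p\leq x}\left(1+\frac{m-1}{p}\right)\leq\exp\left((m-1)\sum_{p\leq x}\frac{1}{p}\right)\leq C_1^{\,m}(\log x)^{m-1},$$
valid for $x$ exceeding an absolute constant. Combining the two estimates gives
$$\sum_{n\leq x}g(n)\leq c_1 C_0\, C_1^{\,m}\,m\, x\,(\log x)^{m-2}.$$

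Finally I would absorb the $C_1^{\,m}$ factor into a higher power of the logarithm. Provided $\log x\geq C_1$ one has $C_1^m\leq(\log x)^m$, so the bound becomes $c_1 C_0\,m\, x\,(\log x)^{2m-2}$, which is of the claimed form with, say, $c_5=2$. For the remaining bounded range of $x$, determined only by the absolute constants $C_0$ and $C_1$, the left-hand side is itself at most a fixed polynomial in $m$, so after enlarging $c_3$ the inequality holds throughout.

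The only real obstacle is bookkeeping: the $e^{O(m)}$ factor produced by Mertens has to be merged into $(\log x)^{c_5 m}$, which forces $c_5$ to be taken as an absolute constant strictly greater than the ``natural'' value $1$ and prevents sharpening the exponent to $m-1$. Everything else is direct substitution into the two classical results.
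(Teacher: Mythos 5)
Your proposal is correct in substance and follows essentially the same route as the paper: the same classical theorem on non-negative multiplicative functions applied to $g(n)=\mu^2(n)(m-1)^{\omega(n)}$ (with the Chebyshev bound supplying the first hypothesis and the vanishing on proper prime powers the second), followed by the Euler-product majorization, $1+u\leq e^u$, and Mertens, the only cosmetic difference being that you absorb the resulting $C_1^m$ into an extra power of $\log x$ while the paper folds it directly into $c_5$ via $\sum_{p\leq x}p^{-1}\leq c_4\log\log x$. One caveat: your final patch for the bounded range of $x$ does not actually work, since for $2\leq x<e$ the factor $(\log x)^{c_5m}$ is exponentially small in $m$ while the left-hand side equals $m$ at $x=2$, so no enlargement of a fixed $c_3$ saves the inequality there; this is, however, a defect of the lemma as stated rather than of your argument, the paper's own proof silently has the same blind spot (Mertens is invoked only for $x\geq 10$), and it is harmless in the application, where $\log x=(\sigma-1/2)^{-1/\theta}$ is large.
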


Indeed, we know that the asymptotics of the partial sums above can be computed with precision, see for instance the book of Tenenbaum \cite{tenenbaumlivro}, pg. 59 exercise 55 (for a route). But we stress that here we will need inequalities valid in a specific range of $x$, so we need to be careful.
\begin{proof}
Observe that $n\mapsto \mu^2(n)(m-1)^{\omega(n)}$ is a non-negative multiplicative function that vanishes at prime powers bigger than $2$. On the other hand, in the paper \cite{kadiri_explicit} by Fiori, Kadiri and Swidinsky, it is given explicit results for the Chebyshev's prime counting function. We can infer that, for a relatively large constant $c_2$, for all $x\geq 2$

$$\sum_{n\leq x}(m-1)\log p \leq c_2(m-1)x. $$
Therefore, there exists a constant $c_3>0$ such that for all $x\geq 2$
$$\sum_{n\leq x}\mu^2(n)(m-1)^{\omega(n)}\leq c_3m\frac{x}{\log x}\sum_{n\leq x}\frac{\mu^2(n)(m-1)^{\omega(n)}}{n}.$$
The last sum above is bounded by
$$\prod_{p\leq x}\left(1+\frac{m-1}{p}\right).$$
The inequality $1+u\leq e^{u}$ tell us that 
$$\sum_{n\leq x}\mu^2(n)(m-1)^{\omega(n)}\leq c_3m\frac{x}{\log x}\exp\left((m-1)\sum_{p\leq x}\frac{1}{p}\right).$$
By Mertens' estimate, there is a constant $c_4>0$ such that for all $x\geq 10$
$$\sum_{p\leq x}\frac{1}{p}\leq c_4\log\log x.$$
Therefore, there exists a constant $c_5>0$ such that that the claim of the lemma follows. \end{proof}

\begin{lemma}\label{lemma evaluacao da serie} Let $\sigma\in(1/2,1)$. Let $c_5$ be as in Lemma \ref{lemma explicite estimate partial sum}. There are absolute positive constants $c_7,c_8$ such that for all $m\geq 4$ and all $x\geq2$, 
$$\sum_{n>x}\frac{\mu^2(n)(m-1)^{\omega(n)}}{n^{2\sigma}}\leq \frac{c_7^m m^{c_5m}}{(\sigma-1/2)^{c_8m}}\frac{(\log x)^{c_5m}}{x^{2\sigma-1}}.$$
\end{lemma}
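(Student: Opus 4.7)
The plan is to convert the tail sum into an integral via Abel summation, insert the upper bound for $A(t):=\sum_{n\leq t}\mu^2(n)(m-1)^{\omega(n)}$ furnished by Lemma \ref{lemma explicite estimate partial sum}, and then evaluate the resulting integral by iterated integration by parts.

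First, partial summation (with the boundary term at infinity vanishing since Lemma \ref{lemma explicite estimate partial sum} gives $A(t)\ll mt(\log t)^{c_5m}$ while $2\sigma>1$) yields
\begin{equation*}
\sum_{n>x}\frac{\mu^2(n)(m-1)^{\omega(n)}}{n^{2\sigma}}\leq 2\sigma\int_x^\infty\frac{A(t)}{t^{2\sigma+1}}\,dt\leq 2\sigma c_3 m\int_x^\infty\frac{(\log t)^{c_5m}}{t^{2\sigma}}\,dt.
\end{equation*}
Writing $\alpha:=2\sigma-1\in(0,1)$ and $k:=c_5m$, the problem reduces to bounding
\begin{equation*}
I_k:=\int_x^\infty\frac{(\log t)^k}{t^{2\sigma}}\,dt.
\end{equation*}

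Next I would integrate by parts with $u=(\log t)^k$ and $dv=t^{-2\sigma}\,dt$, obtaining the recursion $I_k=\frac{(\log x)^k}{\alpha x^\alpha}+\frac{k}{\alpha}I_{k-1}$, which unrolls from $I_0=\frac{1}{\alpha x^\alpha}$ to
\begin{equation*}
I_k=\frac{1}{\alpha x^\alpha}\sum_{i=0}^{k}\frac{k!}{(k-i)!\,\alpha^i}(\log x)^{k-i}\leq\frac{(\log x)^k}{\alpha x^\alpha}\sum_{i=0}^{k}\left(\frac{k}{\alpha\log x}\right)^i,
\end{equation*}
where I used $k!/(k-i)!\leq k^i$. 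The geometric-type sum is bounded by $(k+1)\max\{1,(k/(\alpha\log x))^k\}$; feeding in $\log x\geq\log 2$ via the trivial bound $1\leq(\log x/\log 2)^k$ in the second alternative, both cases are dominated by
\begin{equation*}
I_k\leq\frac{2(k+1)\,k^k\,(\log x)^k}{(\log 2)^k\,\alpha^{k+1}\,x^\alpha}.
\end{equation*}

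Finally, substituting $k=c_5m$ and absorbing the polynomial factor $2\sigma c_3 m\cdot 2(c_5m+1)$, the prefactor $c_5^{c_5m}/(\log 2)^{c_5m}$, and numerical constants into a single $m$-th power $c_7^m$, and noting that $\alpha\in(0,1)$ combined with $c_5m+1\leq(c_5+1)m$ yields $\alpha^{-(c_5m+1)}\leq\alpha^{-c_8m}$ for $c_8:=c_5+1$, we obtain the stated bound. The only genuine subtlety is the regime $\log x<k/\alpha$, in which the dominant contribution from the iterated integration by parts is the tail term $k!/\alpha^{k+1}x^\alpha$, carrying no factor $(\log x)^{c_5m}$; this obstacle is circumvented by the uniform inequality $1\leq(\log x/\log 2)^k$ for $x\geq 2$, which supplies the missing logarithm at the price of an extra constant raised to the $m$-th power, precisely what the statement allows.
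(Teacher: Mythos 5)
Your argument is correct and follows the paper's skeleton exactly up to the key reduction: partial summation plus Lemma \ref{lemma explicite estimate partial sum} to arrive at $2\sigma c_3 m\int_x^\infty (\log t)^{c_5m}t^{-2\sigma}\,dt$. You then diverge in how you estimate this integral. The paper splits $[x,\infty)$ into the blocks $[e^kx,e^{k+1}x]$, pulls out $(\log x)^{c_5m}x^{-(2\sigma-1)}$, and controls the resulting series via $\sup_{k}(k+1)^{c_5m}e^{-k(\sigma-1/2)}\leq c_6^m m^{c_5m}(\sigma-1/2)^{-c_5m}$ together with the geometric sum $\sum_k e^{-k(\sigma-1/2)}\ll(\sigma-1/2)^{-1}$; you instead compute the integral exactly by the incomplete-gamma recursion $I_k=\frac{(\log x)^k}{\alpha x^\alpha}+\frac{k}{\alpha}I_{k-1}$ and bound the unrolled sum by $k!/(k-i)!\leq k^i$ plus a geometric estimate. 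Both routes pay the same price in the same place: when $\log x$ is small compared to $k/\alpha$ the natural bound loses the factor $(\log x)^{c_5m}$, and your fix $1\leq(\log x/\log 2)^k$ is exactly the mechanism that produces the $m^{c_5m}(\sigma-1/2)^{-c_8m}$ loss that the paper's sup-term produces. The quality of the final bound is identical, and your version has the small advantage of being a genuinely explicit closed-form computation.

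One technicality you should patch: $k=c_5m$ is not an integer in general, so the factorials and the unrolling of the recursion down to $I_0$ do not literally parse. This is harmless --- replace $c_5m$ by $\lceil c_5m\rceil$ (handling the range $2\leq t<e$, where $\log t<1$, by the trivial bound $\int_x^\infty t^{-2\sigma}\,dt=x^{-\alpha}/\alpha$), or stop the recursion at the fractional index $k-\lfloor k\rfloor\in[0,1)$ and bound that last integral directly --- but as written the step is not quite licensed. The paper's block decomposition avoids this issue since it never needs the exponent to be an integer.
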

\begin{proof} By applying the Riemann--Stieltjes method, we can write the sum as an integral:
\begin{align*}
\sum_{n>x}\frac{\mu^2(n)(m-1)^{\omega(n)}}{n^{2\sigma}}=\int_x^\infty\frac{1}{t^{2\sigma}}d\left(\sum_{n\leq t}\mu^2(n)(m-1)^{\omega(n)}\right).
\end{align*}
By integration by parts we have that the last integral (by applying carefully the limits of integration):
\begin{multline*}
\sum_{n> x}\frac{\mu^2(n)(m-1)^{\omega(n)}}{n^{2\sigma}}=-\frac{1}{x^{2\sigma}}\sum_{n\leq x}\mu^2(n)(m-1)^{\omega(n)}\\+ 2\sigma\int_x^\infty\frac{1}{t^{2\sigma+1}}\left(\sum_{n\leq t}\mu^2(n)(m-1)^{\omega(n)}\right)dt.
\end{multline*}

Using Lemma \ref{lemma explicite estimate partial sum},
$$\sum_{n> x}\frac{\mu^2(n)(m-1)^{\omega(n)}}{n^{2\sigma}}\leq \frac{c_3m(\log x)^{c_5m}}{x^{2\sigma-1}}+ 2c_3m\int_{x}^\infty \frac{(\log t)^{c_5m}}{t^{2\sigma}}dt.$$

Now
\begin{align*}
\int_{x}^\infty \frac{(\log t)^{c_5m}}{t^{2\sigma}}dt&=\sum_{k=0}^\infty \int_{e^k x}^{e^{k+1}x}\frac{(\log t)^{c_5m}}{t^{2\sigma}}dt\\
&\leq \sum_{k=0}^{\infty} \frac{(k+1+\log x )^{c_5m}}{(2\sigma-1)x^{2\sigma-1}e^{k(2\sigma-1)}}\\
&\leq \frac{1}{(2\sigma-1)}\frac{(\log x)^{c_5m}}{x^{2\sigma-1}}\sum_{k=0}^\infty\frac{(k+1)^{c_5m}}{e^{k(2\sigma-1)}}\\
&\leq \frac{1}{(2\sigma-1)}\frac{(\log x)^{c_5m}}{x^{2\sigma-1}}\left(\sup_{k\geq 1}\frac{(k+1)^{c_5m}}{e^{k(\sigma-1/2)}}\right)\sum_{k=0}^\infty\frac{1}{e^{k(\sigma-1/2)}}.
\end{align*}
Using standard calculus, we see that the sup in the last expression above is bounded by
$$c_6^m \frac{m^{c_5m}}{(\sigma-1/2)^{c_5m}}.$$
Using the fact 
$$\frac{1}{1-e^{-(\sigma-1/2})}=\sum_{k=0}^\infty\frac{1}{e^{k(\sigma-1/2)}}$$
and that $e^u=1+u+O(u^2)$ for small $u$, we obtain constants $c_7,c_8>0$ such that
$$\sum_{n>x}\frac{\mu^2(n)(m-1)^{\omega(n)}}{n^{2\sigma}}\leq \frac{c_7^m m^{c_5m}}{(\sigma-1/2)^{c_8m}}\frac{(\log x)^{c_5m}}{x^{2\sigma-1}},$$
for all $x\geq 2$.
\end{proof}

\section{Proof of the main result}
At this point we have established all the necessary tools for the proof of our main result. Before going to the proof, we will establish two more lemmas. The next one, we make implicit the choice of $a(n)=n^{-\sigma}$ in Lemma \ref{lemma maximal inequality}.
\begin{lemma}\label{lemma estimate for the sup} Let $\lambda>0$. For each $0<\theta<1$, there exists $\sigma_0>1/2$ and $\epsilon>0$ such that uniformly for all $\sigma\in(1/2,\sigma_0]$, and
$$\log x = \frac{1}{(\sigma-1/2)^{1/\theta}},$$ 
we have that for some absolute constant $\beta>0$,
$$\PP\left(\sup_{y\geq x}\left|\sum_{n>y}\frac{f(n)}{n^\sigma}\right|\geq \lambda\right)\leq \exp\left(-\beta \frac{(\log x)^{2-2\theta}}{\log\log x}+(\log\lambda ^{-1})\epsilon\frac{(\log x)^{1-\theta}}{\log\log x}\right).$$
\end{lemma}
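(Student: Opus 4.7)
The plan is to apply Lemma \ref{lemma maximal inequality} with $a(n)=n^{-\sigma}$ and then plug in the explicit bound from Lemma \ref{lemma evaluacao da serie}, and finally optimize over the free parameter $m$. The summability hypothesis $\sum_n a^2(n)(\log n)^2<\infty$ of Lemma \ref{lemma maximal inequality} is immediate for $\sigma>1/2$. With $a^2(n)=n^{-2\sigma}$, the two lemmas together give, for any $m>2$,
$$\PP\!\left(\sup_{y\geq x}\left|\sum_{n>y}\frac{f(n)}{n^\sigma}\right|\geq \lambda\right)\leq \frac{\kappa^{2m}}{\lambda^{m}}\left(\frac{c_7^m\, m^{c_5 m}(\log x)^{c_5 m}}{(\sigma-1/2)^{c_8 m}\,x^{2\sigma-1}}\right)^{\!m/2}.$$

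Taking logarithms, the exponent is
$$2m\log\kappa+m\log\lambda^{-1}+\frac{m^2}{2}\Bigl(\log c_7+c_5\log m+c_5\log\log x+c_8\log\tfrac{1}{\sigma-1/2}\Bigr)-\frac{(2\sigma-1)m\log x}{2}.$$
Now I would exploit the defining relation $\log x=(\sigma-1/2)^{-1/\theta}$: it gives $2\sigma-1=2(\log x)^{-\theta}$, so the last (negative) term equals $-m(\log x)^{1-\theta}$, and it gives $\log((\sigma-1/2)^{-1})=\theta\log\log x$, so the $c_8$ factor is $O(\log\log x)$. As long as we later choose $m\leq(\log x)^{1-\theta}$, we also have $\log m=O(\log\log x)$. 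Collecting everything, the four summands inside the parenthesis combine into $C\log\log x$ for some absolute constant $C>0$, yielding the simplified bound
$$\log\PP\leq -m(\log x)^{1-\theta}+\frac{C\,m^2}{2}\log\log x+m\log\lambda^{-1}+2m\log\kappa.$$

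Then I would optimize by balancing the linear gain against the $m^2\log\log x$ cost, taking $m=(\log x)^{1-\theta}/(C\log\log x)$; this sits well below $(\log x)^{1-\theta}$, which retroactively justifies $\log m=O(\log\log x)$. Plugging in, the first two terms produce a net contribution of $-\frac{1}{2C}(\log x)^{2-2\theta}/\log\log x$, the $m\log\lambda^{-1}$ term becomes $\epsilon(\log\lambda^{-1})(\log x)^{1-\theta}/\log\log x$ with $\epsilon=1/C$, and $2m\log\kappa$ is $O((\log x)^{1-\theta}/\log\log x)$, which is lower order and can be absorbed by slightly shrinking the constant in front of the main negative term (since $2-2\theta>1-\theta$ for $\theta<1$). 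Choosing $\sigma_0$ close enough to $1/2$ makes $m>2$, since $m\to\infty$ as $\sigma\to 1/2^+$. The main obstacle is just the bookkeeping of the constant $C$ absorbing $c_5,c_7,c_8,\theta$ and $\log m$, and checking the self-consistency of $\log m=O(\log\log x)$ at the optimum; once done, the stated bound follows with $\beta=1/(2C)-o(1)$.
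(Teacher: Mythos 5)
Your proposal is correct and follows essentially the same route as the paper: combine Lemma \ref{lemma maximal inequality} (with $a(n)=n^{-\sigma}$) with Lemma \ref{lemma evaluacao da serie}, use $\log x=(\sigma-1/2)^{-1/\theta}$ to turn $(\sigma-1/2)\log x$ into $(\log x)^{1-\theta}$ and $\log((\sigma-1/2)^{-1})$ into $\theta\log\log x$, and choose $m\asymp(\log x)^{1-\theta}/\log\log x$ so that the quadratic-in-$m$ cost is dominated by the linear-in-$m$ gain. The only cosmetic difference is that you optimize $m$ exactly at the balance point to get $\beta=1/(2C)$, whereas the paper takes $m=\epsilon(\log x)^{1-\theta}/\log\log x$ and argues that $w(\epsilon)=\epsilon^2c_{12}-\epsilon c_{11}<0$ for $\epsilon$ small; both yield the stated bound.
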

\begin{proof}
By the previous Lemmas \ref{lemma maximal inequality} and \ref{lemma evaluacao da serie}, for any $\lambda>0$, $m\geq 4$ and all $x\geq 2$ we obtain that
$$\PP\left(\sup_{y\geq x}\left|\sum_{n>y}\frac{f(n)}{n^\sigma}\right|\geq \lambda\right)\leq \frac{\kappa^{m}}{\lambda^m}\left(\frac{c_7^m m^{c_5m}}{(\sigma-1/2)^{c_8m}}\frac{(\log x)^{c_5m}}{x^{2\sigma-1}}\right)^{m/2}.$$

Thus, we are led to optimize
\begin{multline*}
\Lambda:=\exp\bigg{(}c_9m^2\log m+c_9m^{2}\log\log x\\
+c_{10}m^2\log\left(\frac{1}{\sigma-1/2}\right)-mc_{11}(\sigma-1/2)\log x\bigg),
\end{multline*}
where $c_9,c_{10}$ and $c_{11}$ are positive constants such that our target probability is bounded above by $\lambda^{-m}\Lambda$.

In this optimization problem, we begin by choosing $x=x_\sigma$ so that 
\begin{equation}\label{equation auxiliar definition of x} 
\log x = \frac{1}{(\sigma-1/2)^{1/\theta}}, 
\end{equation} 
where $\theta\in(0,1)$ is a fixed parameter. We infer that our optimal solution occurs at
$$m= \epsilon(\sigma-1/2)\frac{\log x}{\log\log x }=\epsilon\frac{(\log x)^{1-\theta}}{\log\log x},$$
for some small fixed $\epsilon>0$ to be chosen shortly.
In particular, we have:
\begin{align*}
c_9m^2\log m &\leq c_9\epsilon^2(1-\theta)\frac{(\log x)^{2-2\theta}}{\log\log x}, \\
c_9m^2\log\log x&=c_9\epsilon^2\frac{(\log x)^{2-2\theta}}{\log\log x},\\
c_{10}m^2\log\left(\frac{1}{\sigma-1/2}\right)&=c_{10}\epsilon^2\theta\frac{(\log x)^{2-2\theta}}{\log\log x},\\
mc_{11}(\sigma-1/2)\log x&=c_{11}\epsilon\frac{(\log x)^{2-2\theta}}{\log\log x}.
\end{align*}

Collecting all these estimates, we obtain that
$$\Lambda\leq \exp\left(w(\epsilon)\frac{(\log x)^{2-2\theta}}{\log\log x}\right),$$
where 
$$w(\epsilon)=\epsilon^2( c_9(1-\theta)+c_9+c_{10}\theta)-\epsilon c_{11}:=\epsilon^2c_{12}-\epsilon c_{11}.$$

We see that $\lim_{\epsilon\to 0^+} w(\epsilon)\epsilon^{-1}=-c_{11}$, so there exists $\epsilon_0>0$ such that
$w(\epsilon_0)<0$. Thus, by setting $-\beta=w(\epsilon_0)$, our target probability is upper bounded
by
$$\frac{1}{\lambda^m}\exp\left(-\beta \frac{(\log x)^{2-2\theta}}{\log\log x}\right)=\exp\left(-\beta \frac{(\log x)^{2-2\theta}}{\log\log x}+(\log\lambda ^{-1})\epsilon\frac{(\log x)^{1-\theta}}{\log\log x} \right).$$
\end{proof}

\begin{lemma}\label{lemma euler product probability} Let $f$ be a Rademacher random multiplicative function, $\delta>0$, $\sigma>1/2$ and $x=x(\sigma,\theta)\geq 1$ as in the previous lemma. Then, for any fixed $\sigma$ sufficiently close to $1/2^{+}$,
$$\PP\left(\sum_{n=1}^\infty\frac{f(n)}{n^\sigma}\geq \exp\left(-2\frac{(\log x)^{1-\theta}}{(\log\log x)^{\delta}}\right)\right)\geq 1-\exp\left(-\frac{1+o(1)}{2\theta}\frac{(\log x)^{2-2\theta}}{(\log\log x)^{1+2\delta}}\right).$$
\end{lemma}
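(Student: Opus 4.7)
The plan is to work with the Euler product representation of $S := \sum_{n=1}^\infty f(n)/n^\sigma$. Since $f$ is Rademacher multiplicative and therefore supported on squarefree integers,
\[
S = \prod_p \left(1 + \frac{f(p)}{p^\sigma}\right),
\]
and each factor lies in $(0,2)$ for $\sigma > 1/2$ and $p\geq 2$, so $S > 0$ almost surely and $\log S$ is well defined. Taking logarithms and Taylor-expanding, using $f(p)^2 = 1$, I would write
\[
\log S = X - \frac{1}{2}\sum_p \frac{1}{p^{2\sigma}} + R,\qquad X := \sum_p \frac{f(p)}{p^\sigma},
\]
where the remainder $R$ is bounded deterministically by $C\sum_p p^{-3\sigma}=O(1)$, uniformly for $\sigma$ in a right-neighborhood of $1/2$, and the almost-sure convergence of $X$ follows from Kolmogorov's three-series theorem (since $\sum_p p^{-2\sigma}<\infty$).

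Next, by \eqref{equation estimate sum over prime powers} together with the defining relation $\log x=(\sigma-1/2)^{-1/\theta}$, which gives $\log((\sigma-1/2)^{-1})=\theta\log\log x$, I have $\sum_p 1/p^{2\sigma}=(1+o(1))\,\theta\log\log x$. Set $h:=\tfrac{1}{2}(\log x)^{1-\theta}/(\log\log x)^\delta$ and $T:=e^{-h}$. Substituting into the decomposition above, the event $\{S<T\}$ becomes $\log S<-h$, which forces
\[
X \;<\; -h + (1+o(1))\,\tfrac{\theta}{2}\log\log x + O(1) \;=\; -h\,(1-o(1)),
\]
because $h$ is of order $(\log x)^{1-\theta}/(\log\log x)^\delta$ and so strictly dominates $\log\log x$ in the regime $\sigma\to 1/2^+$.

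The endgame is to apply Hoeffding's inequality (Lemma \ref{lemma hoeffding}) to the lower tail of the Rademacher series $X$, which yields
\[
\PP\bigl(X \leq -h(1-o(1))\bigr) \;\leq\; \exp\left(-\frac{h^2\,(1-o(1))}{2\,\theta\log\log x}\right),
\]
and inserting $h^2=\tfrac{1}{4}(\log x)^{2-2\theta}/(\log\log x)^{2\delta}$ produces an upper bound of the advertised shape $\exp\!\left(-c_\theta\,(\log x)^{2-2\theta}/(\log\log x)^{1+2\delta}\right)$, with $c_\theta$ depending only on $\theta$ through the ratio $1/\theta$.

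\textbf{Main obstacle.} The chief technical point is the bookkeeping of the deterministic mean $-\tfrac{1}{2}\sum_p p^{-2\sigma}$ and of the Taylor remainder $R$: both are $O(\log\log x)$, so they are negligible compared with $h$, but they have to be tracked carefully enough to pin down the constant $c_\theta$ in the exponent and to match the advertised $1/(2\theta)$. A secondary concern is justifying that the infinite Euler product actually coincides with the almost-surely convergent sum $\sum_n f(n)/n^\sigma$, which follows from $f$ being supported on squarefrees and from the standard convergence criterion for infinite products (using $\sum_p p^{-2\sigma}<\infty$).
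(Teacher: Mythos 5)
Your argument follows the same route as the paper's: represent $\sum_n f(n)n^{-\sigma}$ by its Euler product, take logarithms to isolate the prime sum $X=\sum_p f(p)p^{-\sigma}$ minus the deterministic drift $\tfrac12\sum_p p^{-2\sigma}=(1+o(1))\tfrac{\theta}{2}\log\log x$ (the paper imports this as \eqref{equation euler product} from earlier work; note that identifying the conditionally convergent series with its Euler product on $1/2<\sigma\leq 1$ really does need that cited lemma, not just the standard criterion for infinite products), and then apply the Hoeffding bound of Lemma \ref{lemma hoeffding} to $X$. Your handling of the drift and of the Taylor remainder is correct, and the observation that $\{S<e^{-h}\}$ forces $X<-h(1-o(1))$ because $h\gg\log\log x$ is exactly right. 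The one place you fall short of the statement is the one you flag yourself: applying Hoeffding at the threshold $h=\tfrac12(\log x)^{1-\theta}(\log\log x)^{-\delta}$ yields the exponent $\tfrac{(1+o(1))h^2}{2\theta\log\log x}=\tfrac{1+o(1)}{8\theta}(\log x)^{2-2\theta}(\log\log x)^{-1-2\delta}$, i.e.\ the constant $\tfrac{1}{8\theta}$ in place of the advertised $\tfrac{1}{2\theta}$, which is a strictly weaker probability lower bound, so the lemma as literally stated is not recovered. Moreover this is not a bookkeeping issue you can optimize away within this method: the paper reaches $\tfrac{1}{2\theta}$ by thresholding $|X|$ at the larger value $H=(\log x)^{1-\theta}(\log\log x)^{-\delta}$ and then asserting that $|X|\leq H$ implies $S\geq e^{-H/2}$, whereas your decomposition (correctly) shows that this event only gives $S\geq e^{-(1+o(1))H}$. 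So the factor-of-constant discrepancy you identify as the ``main obstacle'' is genuine and is present in the paper's own derivation as well; what your argument actually proves is the lemma with either the exponential constant relaxed to $\tfrac{1}{8\theta}$ or the threshold relaxed to $e^{-2H}$, and either version is enough to run the proof of Theorem \ref{theorem principal}, up to the value of the leading constant appearing there.
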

\begin{proof}
By the Euler product representation for the Dirichlet series, and classical estimates for the Riemann $\zeta$ function (see Lemma 2.4 of \cite{aymone_sign_changes}), we have that for all $\sigma>1/2$, almost surely
\begin{equation}\label{equation euler product}
\sum_{n=1}^\infty\frac{f(n)}{n^{\sigma}}=\exp\left(\sum_p\frac{f(p)}{p^{\sigma}}-\frac{1}{2}\log\left(\frac{1}{\sigma-1/2}\right)+O(1)\right).
\end{equation}
Now, by Lemma \ref{lemma hoeffding}, we have that as $\sigma\to1/2^+$,
\begin{align*}
\PP\left(\left|\sum_p\frac{f(p)}{p^{\sigma}}\right|> \frac{(\log x)^{1-\theta}}{(\log\log x)^{\delta}}\right)&\leq 2\exp\left(-\frac{1+o(1)}{2}\frac{((\log x)^{1-\theta})^{2}}{(\log\log x)^{2\delta}}\frac{1}{\log(\sigma-1/2)^{-1}}\right)\\
&\leq \exp\left(-\frac{1+o(1)}{2\theta}\frac{(\log x)^{2-2\theta}}{(\log\log x)^{1+2\delta}}\right).
\end{align*}

Therefore, the event on which
$$\left|\sum_p\frac{f(p)}{p^{\sigma}}\right|\leq \frac{(\log x)^{1-\theta}}{(\log\log x)^{\delta}}$$
has probability at least
$$1-\exp\left(-\frac{1+o(1)}{2\theta}\frac{(\log x)^{2-2\theta}}{(\log\log x)^{1+2\delta}}\right).$$
To conclude, we see that in this event, by \eqref{equation euler product}, 
$$\sum_{n=1}^\infty\frac{f(n)}{n^\sigma}\geq \exp\left(-2\frac{(\log x)^{1-\theta}}{(\log\log x)^{\delta}}\right).$$
\end{proof}

\begin{proof}[Proof of the main result] Let $\mathcal{A}$ be the event in which
$$\sum_{n=1}^\infty\frac{f(n)}{n^\sigma}\geq \exp\left(-2\frac{(\log x)^{1-\theta}}{(\log\log x)^{\delta}}\right)$$
where $x$ is defined as in Lemma \ref{lemma estimate for the sup} and \ref{lemma euler product probability}. Let $\mathcal{B}_x$ be the event in which
\begin{equation}\label{equation auxiliar lambda}
\sup_{y>x}\left|\sum_{n>y}\frac{f(n)}{n^\sigma}\right|\leq \frac{1}{2}\exp\left(-2\frac{(\log x)^{1-\theta}}{(\log\log x)^{\delta}}\right):=\lambda.
\end{equation}

We see that in the event $\mathcal{A}\cap\mathcal{B}_x$ the partial sums
$$\sum_{n\leq y}\frac{f(n)}{n^\sigma}$$
are positive for all $y>x$.
 
Now, by Lemma \ref{lemma euler product probability}, $\mathcal{A}$ has probability at least
$$1-\exp\left(-\frac{1+o(1)}{2\theta}\frac{(\log x)^{2-2\theta}}{(\log\log x)^{1+2\delta}}\right).$$

Let $\lambda$ be as in \eqref{equation auxiliar lambda}. By Lemma \ref{lemma estimate for the sup}, we obtain that the probability of $\mathcal{B}_x$ is at least
\begin{align*}
&1-\exp\left(-\beta \frac{(\log x)^{2-2\theta}}{\log\log x}+\epsilon(\log \lambda^{-1})\frac{(\log x)^{1-\theta}}{\log\log x} \right)\\
=&1-\exp\left(-\beta \frac{(\log x)^{2-2\theta}}{\log\log x}+(2\epsilon+o(1))\frac{(\log x)^{2-2\theta}}{(\log\log x)^{1+\delta}} \right)\\
=&1-\exp\left(-(\beta+o(1)) \frac{(\log x)^{2-2\theta}}{\log\log x}\right).
\end{align*}

In particular, as $\sigma\to1/2^+$
$$\PP(\mathcal{B}_x^c)=o(\PP(\mathcal{A}^c)),$$
where $\cdot^c$ indicates the complementary operation.
Therefore, as $\sigma\to 1/2^+$, 
$$\PP(\mathcal{A}\cap\mathcal{B}_x)\geq 1-\exp\left(-\frac{1+o(1)}{2\theta}\frac{(\log x)^{2-2\theta}}{(\log\log x)^{1+2\delta}}\right),$$
and this completes the proof.
\end{proof}

\section{A concluding remark}

We observe that we can allow $\frac{1}{\theta}\to\infty$, or more generally, to make $\log x$ much larger than any power of 
$(\sigma-1/2)^{-1}$. All we have to do is to rework our proof, and observing that at the end we may have
$$\PP(\mathcal{A})=o(\PP(\mathcal{B}_x)).$$

Our intention here was to make $x$ minimal as possible as a function of $(\sigma-1/2)^{-1}$. We left to the interested reader to outline the details when $\log x$ is is much larger then $(\sigma-1/2)^{-1}$.

\vspace{1cm}

\noindent\textbf{Acknowledgements.} The author is thankful to the anonymous referee for his/her corrections, comments and suggestions that really improved the exposition of this paper. The research of the author is funded by FAPEMIG grant Universal APQ-00256-23 and by CNPq grant Universal 403037 / 2021-2.   

\bibliographystyle{siam}
\bibliography{ay.bib}

\begin{thebibliography}{10}

\bibitem{angelo_xu_positivity}
{\sc R.~Angelo and M.~W. Xu}, {\em On a {T}ur\'an conjecture and random
  multiplicative functions}, Q. J. Math., 74 (2023), pp.~767--777.

\bibitem{aymone_sign_changes_II}
{\sc M.~Aymone}, {\em Sign changes of the partial sums of a random
  multiplicative function {II}}, C. R. Math. Acad. Sci. Paris, 362 (2024),
  pp.~895--901.

\bibitem{aymone_sign_changes_III}
\leavevmode\vrule height 2pt depth -1.6pt width 23pt, {\em {S}ign changes of
  the partial sums of random multiplicative functions {III}: {A}verage}, Bull.
  Soc. Math. France (to appear),  (2025).

\bibitem{aymoneLIL}
{\sc M.~Aymone, S.~Fr\'{o}meta, and R.~Misturini}, {\em Law of the iterated
  logarithm for a random {D}irichlet series}, Electron. Commun. Probab., 25
  (2020), pp.~Paper No. 56, 14.

\bibitem{aymone_sign_changes}
{\sc M.~Aymone, W.~Heap, and J.~Zhao}, {\em Sign changes of the partial sums of
  a random multiplicative function}, Bull. Lond. Math. Soc., 55 (2023),
  pp.~78--89.

\bibitem{billingsley_convergence_prob_meas}
{\sc P.~Billingsley}, {\em Convergence of probability measures}, Wiley Series
  in Probability and Statistics: Probability and Statistics, John Wiley \&
  Sons, Inc., New York, second~ed., 1999.
\newblock A Wiley-Interscience Publication.

\bibitem{bonami_inequality}
{\sc A.~Bonami}, {\em \'{E}tude des coefficients de {F}ourier des fonctions de
  {$L\sp{p}(G)$}}, Ann. Inst. Fourier (Grenoble), 20 (1970), pp.~335--402.

\bibitem{pena_decoupling}
{\sc V.~H. de~la Pe\~na and E.~Gin\'e}, {\em Decoupling}, Probability and its
  Applications (New York), Springer-Verlag, New York, 1999.
\newblock From dependence to independence, Randomly stopped processes.
  $U$-statistics and processes. Martingales and beyond.

\bibitem{kadiri_explicit}
{\sc A.~Fiori, H.~Kadiri, and J.~Swidinsky}, {\em Sharper bounds for the
  {C}hebyshev function {$\psi(x)$}}, J. Math. Anal. Appl., 527 (2023),
  pp.~Paper No. 127426, 28.

\bibitem{geis_counting_sign_changes}
{\sc N.~Geis and G.~Hiary}, {\em Counting sign changes of partial sums of
  random multiplicative functions}, Q. J. Math., 76 (2025), pp.~19--45.

\bibitem{halasz}
{\sc G.~Hal{\'a}sz}, {\em On random multiplicative functions}, in Hubert
  {D}elange colloquium ({O}rsay, 1982), vol.~83 of Publ. Math. Orsay, Univ.
  Paris XI, Orsay, 1983, pp.~74--96.

\bibitem{klurman_positivity}
{\sc B.~Kerr and O.~Klurman}, {\em How negative can $\sum_{n\leq
  x}\frac{f(n)}{n}$ be?}, arXiv:2211.05540,  (2022).

\bibitem{klurman_counting_sign_changes}
{\sc O.~Klurman, Y.~Lamzouri, and M.~Munsch}, {\em {S}ign changes of short
  character sums and real zeros of {F}ekete polynomials}, arXiv:2403.02195,
  (2024).

\bibitem{meaney}
{\sc C.~Meaney}, {\em Remarks on the {R}ademacher-{M}enshov theorem}, in
  C{MA}/{AMSI} {R}esearch {S}ymposium ``{A}symptotic {G}eometric {A}nalysis,
  {H}armonic {A}nalysis, and {R}elated {T}opics'', vol.~42 of Proc. Centre
  Math. Appl. Austral. Nat. Univ., Austral. Nat. Univ., Canberra, 2007,
  pp.~100--110.

\bibitem{montgomery_livro}
{\sc H.~L. Montgomery and R.~C. Vaughan}, {\em Multiplicative number theory.
  {I}. {C}lassical theory}, vol.~97 of Cambridge Studies in Advanced
  Mathematics, Cambridge University Press, Cambridge, 2007.

\bibitem{ramare_book}
{\sc O.~Ramar\'e}, {\em Excursions in multiplicative number theory},
  Birkh\"auser Advanced Texts: Basler Lehrb\"ucher. [Birkh\"auser Advanced
  Texts: Basel Textbooks], Birkh\"auser/Springer, Basel, [2022] \copyright
  2022.
\newblock With contributions by Pieter Moree and Alisa Sedunova.

\bibitem{tenenbaumlivro}
{\sc G.~Tenenbaum}, {\em Introduction to analytic and probabilistic number
  theory}, vol.~46 of Cambridge Studies in Advanced Mathematics, Cambridge
  University Press, Cambridge, 1995.
\newblock Translated from the second French edition (1995) by C. B. Thomas.

\end{thebibliography}

\vspace{2cm}

{\small{\sc \noindent
Departamento de Matem\'atica, Universidade Federal de Minas Gerais (UFMG), Av. Ant\^onio Carlos, 6627, CEP 31270-901, Belo Horizonte, MG, Brazil.} \\
\textit{Email address:} \verb|aymone.marco@gmail.com| }

\end{document}